\theoremstyle{plain}
\newtheorem{theorem}{Theorem}[section]
\newtheorem{lemma}[theorem]{Lemma}
\newtheorem{corollary}[theorem]{Corollary}
\newtheorem{observation}[theorem]{Observation}
\theoremstyle{definition}
\newtheorem{example}[theorem]{Example}
\newtheorem{remark}[theorem]{Remark}
\renewcommand{\i}{{\rm i}}
\newcommand{\ST}{\, \big | \,}
\newcommand{\suchthat}{ \,\, | \,\, }
\DeclareMathOperator{\at}{\,{\rule[-3mm]{.1mm}{8mm}}\,}
\DeclareMathOperator{\jac}{Jac}
\title{Existence of a Not Necessarily Symmetric Matrix with Given Distinct Eigenvalues and Graph}
\author{Keivan Hassani Monfared}
\affil{University of Calgary\\ k1monfared@gmail.com}
\date{}
\begin{document}
\maketitle

\begin{abstract}
For given distinct numbers $\lambda_1 \pm \mu_1 \i, \lambda_2 \pm \mu_2 \i, \ldots, \lambda_k \pm \mu_k \i \in \mathbb{C} \setminus \mathbb{R}$ and $\gamma_1, \gamma_2, \ldots, \gamma_l \in \mathbb{R}$, and a given graph $G$ with a matching of size at least $k$, we will show that there is a real matrix whose eigenvalues are the given numbers and its graph is $G$. In particular, this implies that any real matrix with distinct eigenvalues is similar to a real, irreducible, tridiagonal matrix.\\

{Keywords: Inverse Eigenvalue Problem, Graph, Jacobian Method, Implicit Function Theorem, Transversal Intersection\\

MSC 2010: 05C50, 15A18, 15A29, 15B57, 65F18}
\end{abstract}

\section{Introduction}
	
	A \emph{directed graph} $G = (V,E)$ is a pair of sets $V$ and $E$ where $V$ is the set of vertices of $G$, and $E$, the set of edges of $G$, is a subset of $V \times V$. That is, each element of $E$ is an ordered pair $(u,v)$, with $u,v \in V$. We say a graph $G$ is \emph{loopless} when for each $(u,v) \in E$, we have $u \neq v$. In this paper we only consider loopless graphs. If $(u,v) \in E$ then we say $u$ is adjacent to $v$ and denote it by $u \rightarrow v$. Note that such a graph might have both edges $(u,v)$ and $(v,u)$, but since $E$ is a set, there are no multiple edges from $u$ to $v$. A directed loopless graph $G = (V,E)$ is said to have a \emph{matching} of size $k$ if $E$ contains $k$ vertex-disjoint edges $(u_1,v_1), \ldots, (u_k,v_k)$ and their reverses $(v_1,u_1), \ldots, (v_k,u_k)$.
	
	If for each $u \neq v$ the edge $(u,v) \in E$ if and only if $(v,u) \in E$, then $G$ is bidirected. Hence we can ignore the directions of edges and consider $E$ as a set of 2-subsets of $V$. That is, $E \subset \big\{ \{ u,v \} \ST u,v \in V \big\}$. In this case we call $G$ an \emph{undirected graph}. An undirected loopless graph $G=(v,E)$ is said to have a \emph{matching} of size $k$ if $E$ contains $k$ vertex-disjoint edges $\{u_1,v_1\}, \ldots, \{u_k,v_k\}$
	
	Let $A \in \mathbb{R}^{n\times n}$. We say a (directed or undirected) loopless graph $G$ is the \emph{graph of the matrix} $A$ when for each $i\neq j$ we have $A_{i,j} \neq 0$ if and only if $i \rightarrow j$. Note that the diagonal entries of $A$ can be zero or nonzero.

	It is of interest to study the existence of matrices with given spectral properties and graph, see \cite[Chapter 4]{chugolub05}.	 For the problems when the solution matrix is not necessarily symmetric see \cite{chugolub02} for a survey on the structured inverse eigenvalue problems with an extensive bibliography, specially SIEP6b, and see \cite{minranknonsymm09} for the minimum rank problems. In Section \ref{SectionPreliminaries} we provide some machinery in order to prove the main theorem in Section \ref{SectionMainResult} about the existence of a solution for the inverse eigenvalue problem for a graph when the solution matrix is not necessarily symmetric.

\section{Preliminaries}\label{SectionPreliminaries}
In this section we first introduce the notion of transversality, and use it to show simple real roots of a real polynomial remain real under small perturbations. Then we give an example of a real matrix whose spectrum is a given set of real numbers and pairs of complex conjugate numbers. For the given matrix we define a neighborhood of its spectrum and put an order on it. We finally study the how small perturbations of the matrix changes its simple eigenvalues.

	\subsection{Transversal intersections}
	
	Here we first define two families of manifolds and show that they intersect transversally at some points. Then we will use this result to show that small perturbation of a real polynomial does not change the number of its simple real roots.

	Let 
	\begin{equation}\label{DefPeeTeeEx}
	p_t(x) = x^n + a_{n-1}(t) x^{n-1} + \cdots + a_1(t) x + a_0(t) \in \mathbb{R}[x],
	\end{equation}
	where for each $i = 0, 1, \ldots, n-1$ the coefficient $a_i(t)$ is a continuous function of $t$ from $(-1,1)$ to $\mathbb{R}$. For each $t \in (-1,1)$ define 
	\begin{equation}\label{DefPeeTee}
	P(t) = \{ (x, p_t(x)) \in \mathbb{R}^2 \ST x \in \mathbb{R}\},
	\end{equation}
	and 
	\begin{equation}\label{DefEss}
	S = \{ (x,0) \in \mathbb{R}^2 \ST x \in \mathbb{R} \}.
	\end{equation}

	Note that $S$ and $P(t)$ for each $t \in (-1,1)$ are smooth manifolds of $\mathbb{R}^2$. The tangent space to $S$ at any point $ (x_0,0) \in \mathbb{R}^2$, $\mathcal{T}_{S.(x_0,0)}$, is $S$ itself, and the tangent space to $P(t)$ at any point $(x_0,p_t(x_0))$, $\mathcal{T}_{P(t).(x_0,p_t(x_0))}$ is the tangent line to the graph of $y = p_t(x)$ at the point $(x_0,p_t(x_0))$. The latter tangent space is the set 
	\[ 
	\mathcal{T}_{P(t).(x_0,p_t(x_0))} = \{ (x_0, p'_t(x_0)(x-x_0)+p_t(x_0) \ST x \in \mathbb{R}\},
	\]
	where $p'_t(x_0)$ denotes the derivative of $p_t(x)$ evaluated at $x_0$. It is evident that $\mathcal{T}_{P(t).(x_0,p_t(x_0))}$ is a line not parallel to $S$ when $x_0$ is not a root of $p'_t(x)$. In particular, when $x_0$ is a root of $p_t(x)$, then $P(t)$ and $S$ intersect transversally at $x_0$ if and only if $x_0$ is a simple root of $p_t(x)$. We shall need the following special case of \cite[Lemma 2.1]{schrijver99}.
	
\begin{lemma}\label{manifoldversionofIFT}
	Let $P(t)$ and $S(t)$ be smooth families of manifolds in $\mathbb{R}^N$, for some positive integer $N$, and assume that $P(0)$ and $S(0)$ intersect transversally at $x$. Then there exists a neighborhood $W \subseteq \mathbb{R}^2$ of the origin, such that for each $\bm \varepsilon = (\varepsilon_1, \varepsilon_2) \in W$, the manifolds $P(\varepsilon_1)$ and $S(\varepsilon_2)$ intersect transversally at a point $x(\varepsilon)$, so that $x(0)=x$ and $x(\varepsilon)$ depends continuously on $\varepsilon$.
\end{lemma}

	The following lemma shows that if $p(x)$ is a polynomial in $\mathbb{R}[x]$ with $k$ simple real roots, then any sufficiently small perturbation of $p(x)$ also has $k$ simple real roots.

\begin{lemma}\label{SmallPerturbationOfPollynomial}
	Let $p_t(x)$ be defines as in Equation \eqref{DefPeeTeeEx}. If $p_0(x)$ has a simple real root, then there is $\varepsilon > 0$ such that for each $-\varepsilon < t < \varepsilon$ the polynomial $p_t(x)$ has a simple real root.
\end{lemma}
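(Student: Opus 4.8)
The plan is to apply Lemma~\ref{manifoldversionofIFT} to the two families $P(t)$ and $S$ from Equations~\eqref{DefPeeTee} and~\eqref{DefEss}, regarding $S$ as a family that happens to be constant in its parameter. First I would let $x_0$ be a simple real root of $p_0(x)$, which exists by hypothesis. Then $(x_0,0) \in P(0) \cap S$, and by the discussion immediately preceding Lemma~\ref{manifoldversionofIFT}, the fact that $x_0$ is a \emph{simple} root means $x_0$ is not a root of $p_0'(x)$, so the tangent line $\mathcal{T}_{P(0).(x_0,0)}$ is not parallel to $S$; hence $P(0)$ and $S$ intersect transversally at $(x_0,0)$.

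Next, with $N = 2$, the family $P(\cdot)$ as above, and $S(\cdot) \equiv S$, Lemma~\ref{manifoldversionofIFT} produces a neighborhood $W \subseteq \mathbb{R}^2$ of the origin such that for every $\bm\varepsilon = (\varepsilon_1, \varepsilon_2) \in W$ the manifolds $P(\varepsilon_1)$ and $S$ intersect transversally at a point $x(\bm\varepsilon)$ with $x(0,0) = (x_0,0)$. I would then choose $\varepsilon > 0$ small enough that $(-\varepsilon,\varepsilon) \times \{0\} \subseteq W$. For each $t$ with $-\varepsilon < t < \varepsilon$, the point $x(t,0)$ lies in $P(t) \cap S$; writing $x(t,0) = (r(t),0)$, membership in $S$ forces the second coordinate to be $0$, and membership in $P(t)$ gives $p_t(r(t)) = 0$, so $r(t)$ is a real root of $p_t(x)$.

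Finally I would invoke once more the equivalence recorded before Lemma~\ref{manifoldversionofIFT}: since $P(t)$ and $S$ meet \emph{transversally} at $(r(t),0)$, the tangent line to the graph of $y = p_t(x)$ at that point is not parallel to $S$, i.e.\ $p_t'(r(t)) \neq 0$, so $r(t)$ is a \emph{simple} real root of $p_t(x)$. As this holds for every $t \in (-\varepsilon,\varepsilon)$, the lemma follows.

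The only delicate point is the bookkeeping needed to feed Lemma~\ref{manifoldversionofIFT} a pair of families one of which is held constant, and to confirm that the $x$-axis $S$ and each graph $P(t)$ are genuine smooth submanifolds of $\mathbb{R}^2$ varying appropriately with their parameters; both are immediate, since $P(t)$ is the graph of a polynomial whose coefficients depend continuously on $t$ and $S$ does not vary at all. Everything else amounts to translating ``transversal intersection with the $x$-axis'' into ``simple real root,'' which was already established in the paragraph preceding Lemma~\ref{manifoldversionofIFT}.
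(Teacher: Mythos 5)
Your proposal is correct and follows essentially the same route as the paper: take a simple root $x_0$ of $p_0$, note that $P(0)$ and $S$ intersect transversally there, apply Lemma~\ref{manifoldversionofIFT}, and translate transversality back into simplicity of the real root of $p_t$. Your version is, if anything, slightly more careful than the paper's in restricting the two-parameter neighborhood $W$ to the slice $(-\varepsilon,\varepsilon)\times\{0\}$ and in separating the coordinates of the intersection point.
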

\begin{proof}
	Let $x_0$ be a simple root of $p_0(x)$, and let $S$ and $P(t)$ be defines be Equations \eqref{DefPeeTee} and \eqref{DefEss}. Then $S$ and $P(0)$ intersect transversally at $x_0$, and thus by Lemma \ref{manifoldversionofIFT} there is an $\varepsilon > 0$ such that for any $-\varepsilon < t < \varepsilon$ the manifolds $P(t)$ and $S$ intersect transversally at $x_0(t)$ where $x_0(0) = x_0$ and $x_0(t)$ depends continuously on $t$. In particular, $x_0(t)$ is a simple root of $p_t(x)$.
\end{proof}

\begin{corollary}\label{CorNumOfRootsFixed}
	If $p(x)$ is a polynomial of degree $n$ in $\mathbb{R}[x]$ with $k$ simple real roots and $n-k$ distinct non-real roots, then any sufficiently small perturbation of $p(x)$ also has $k$ simple real roots and $n-k$ distinct non-real roots.
\end{corollary}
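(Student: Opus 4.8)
The plan is to bootstrap from Lemma \ref{SmallPerturbationOfPollynomial}, which handles a single simple real root, to the full count of $k$ simple real roots together with $n-k$ distinct non-real roots, using two ingredients: the continuity of roots as functions of coefficients, and the fact that complex roots come in conjugate pairs. First I would set up the perturbation: write $p(x) = x^n + a_{n-1}x^{n-1} + \cdots + a_0$ and consider $p_t(x) = x^n + a_{n-1}(t)x^{n-1} + \cdots + a_0(t)$ where each $a_i(t) \to a_i$ as $t \to 0$ (this is exactly the shape of Equation \eqref{DefPeeTeeEx}, and ``sufficiently small perturbation'' means $t$ in a small enough interval). Let the $k$ simple real roots of $p_0 = p$ be $x_1, \ldots, x_k$ and the $n-k$ distinct non-real roots be $z_1, \bar z_1, \ldots$ — since $n-k$ must be even, say $n-k = 2m$, these are $z_1, \bar z_1, \ldots, z_m, \bar z_m$ with each $z_j \notin \mathbb{R}$.

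The key steps, in order: (1) For each real root $x_i$ separately, apply Lemma \ref{SmallPerturbationOfPollynomial} — but I actually want more, namely that the perturbed simple real root near $x_i$ is \emph{the only} root of $p_t$ in a small neighborhood of $x_i$; this follows from the continuity of $x_0(t)$ in the lemma combined with choosing disjoint neighborhoods around the distinct points $x_1, \ldots, x_k, z_1, \bar z_1, \ldots, z_m, \bar z_m$. (2) Invoke continuity of roots: since the coefficients $a_i(t)$ vary continuously and the leading coefficient is fixed at $1$, the multiset of $n$ roots of $p_t$ (in $\mathbb{C}$) varies continuously with $t$; hence for $t$ small, $p_t$ has exactly one root in each of the $n$ chosen disjoint neighborhoods. (3) The $k$ neighborhoods around $x_1, \ldots, x_k$ each contribute a simple real root by step (1)/Lemma \ref{SmallPerturbationOfPollynomial}. (4) The $2m$ neighborhoods around the non-real roots: each lies strictly off the real axis, so (shrinking if necessary) its root is non-real; and since $p_t \in \mathbb{R}[x]$, the non-real roots of $p_t$ come in conjugate pairs, and the neighborhood around $\bar z_j$ is the conjugate of that around $z_j$, so these $2m$ roots are genuinely the $m$ conjugate pairs. (5) Distinctness: the $n$ roots lie in $n$ pairwise disjoint neighborhoods, so they are distinct; in particular the real ones are simple, and the $n-k$ non-real ones are distinct from each other and from the real ones. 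That accounts for all $n$ roots, so the count is exact.

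The main obstacle — or rather the only point needing care — is step (2): the claim that the roots of a monic polynomial depend continuously on its coefficients. This is standard (it follows, e.g., from Rouché's theorem, or from the fact that the elementary symmetric functions are a proper map), but since the paper's toolkit is built around transversality, one could alternatively argue step (2) in the same spirit as Lemma \ref{SmallPerturbationOfPollynomial}: near a simple real root transversality of $P(t)$ and $S$ already pins down the root continuously, and near a simple \emph{non-real} root $z_j$ one uses instead that $p_0$ and $p_0'$ have no common zero there, so the implicit function theorem applied to $p_t(z) = 0$ in the complex variable $z$ (equivalently, to the real system $\operatorname{Re} p_t = \operatorname{Im} p_t = 0$ in $\mathbb{R}^2$, whose Jacobian is nonsingular precisely when $p_0'(z_j) \neq 0$) yields a continuous branch $z_j(t)$ with $z_j(0) = z_j$. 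Either way, once each of the $n$ roots is tracked continuously on a small interval and the neighborhoods are taken disjoint, the conclusion is immediate. I expect the write-up to be short: cite Lemma \ref{SmallPerturbationOfPollynomial} for the real roots, note the conjugate-pair structure for the non-real ones, and invoke continuity of roots (or the implicit function theorem) to keep everything separated and correctly counted.
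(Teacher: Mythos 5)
Your proposal is correct and follows exactly the route the paper intends: the paper states Corollary \ref{CorNumOfRootsFixed} without proof as an immediate consequence of Lemma \ref{SmallPerturbationOfPollynomial}, and your argument (apply that lemma at each simple real root, then use continuity of the roots together with disjoint neighborhoods off the real axis, conjugate symmetry, and the count of $n$ roots to handle the non-real ones) simply supplies the details the paper leaves implicit.
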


	\subsection{A matrix with a given spectrum}
	
	Now, we show that for given $l$ distinct real numbers, $2k$ distinct non-real numbers which are conjugate pairs, there is a real matrix whose eigenvalues are the given numbers. Then we define some (ordered) neighbourhood of its spectrum.
	
\begin{example}\label{TheMatrixAy}
	There is a real matrix $A$ whose eigenvalues are given numbers $\lambda_j \pm \mu_j \i \in \mathbb{C} \setminus \mathbb{R}$ for $j = 1,2,\ldots, k$, and $\gamma_j \in \mathbb{R}$ for $j = 1,2,\ldots, l$.
	Let 
	\[
	A = \left( \bigoplus_{j=1}^{k} \left[ \begin{array}{cc}
	\lambda_j & \mu_j\\
	-\mu_j & \lambda_j
	\end{array} \right] \right)
	\oplus
	\left( \bigoplus_{j=1}^{l}
	\left[ \begin{array}{c}
	\gamma_j
	\end{array} \right] \right).
	\]
	Note that a unit eigenvector corresponding to the eigenvalue $\lambda_j \pm \mu_j \i$ is
	\begin{center}
	\begin{tikzpicture}
		\node[] () at (0,0) {$
	\bm v_j = \frac{1}{\sqrt{2}}\left[ \begin{array}{c}
	0\\
	\vdots\\
	0\\
	1\\
	\pm \i\\
	0\\
	\vdots\\
	0	
	\end{array} \right].
	$};
		\draw[->] (2.2,-.2) -- (.8,-.2);
		\draw[->] (1.9,.3) -- (.8,.3);
		\node[scale=.7] () at (2.5,-.2) {$2j$};
		\node[scale=.7] () at (2.5,.3) {$2j-1$};
	\end{tikzpicture}
	\end{center}
	Furthermore, note that the corresponding eigenvector of the same eigenvalue for $A^\top$ is $\bm w_j = \overline{\bm v}_j$.
\end{example}

\begin{remark}
	Note that this simple example shows that any real matrix is similar to a tridiagonal real matrix.
\end{remark}

Now, we define a matrix of variables for a graph $G$ and we will consider the rate of change of its eigenvalues as the variables change. Let $G$ be a graph on $n = 2k+l$ vertices and $k+m$ edges. Assume that $G$ has a matching $\mathcal{M} = \big\{ \{1,2\}, \{3,4\}, \ldots, \{2k-1,2k\} \big\}$, and let the rest of the edges of $G$ be denoted by $\{i_r,j_r\}$, with $i_r < j_r$ for $r = 1,2,\ldots,m$. Also, let $\bm x = (x_1,x_2,\ldots,x_k), \bm y = (y_1,y_2,\ldots,y_k) \in \mathbb{R}^k$, $\bm z = (z_1,z_2,\ldots,z_l) \in \mathbb{R}^l$, and $\bm u = (u_1,u_2,\ldots,u_m), \bm \omega = (\omega_1,\omega_2,\ldots,\omega_m) \in \mathbb{R}^m$. 

Define $M = M(\bm x, \bm y, \bm z, \bm u, \bm \omega)$ with $x_j$ on the $(2j-1,2j-1)$ and $(2j,2j)$ positions and $y_j$ on the $(2j-1,2j)$ position and $-y_j$ on the $(2j,2j-1)$ position, for $j = 1,2,\ldots,k$; and $z_j$ on the $(2k+j,2k+j)$ position, for $j=1,2,\ldots,l$; and $u_r$ on $(i_r,j_r)$ position, and $\omega_r$ on $(j_r,i_r)$ position, for $r= 1,2,\ldots,m$. The matrix $M$ has the following form
	\[
	M = \left[ \begin{array}{c|c|c|c}
	\begin{array}{cc}
		x_1 & y_1\\
		-y_1 & x_1	
	\end{array} &&&\\ \hline
	& \begin{array}{c}
	\ddots
	\end{array} &&\\ \hline
	&& \begin{array}{cc}
		x_k & y_k\\
		-y_k & x_k
	\end{array} & \\ \hline
	&&& \begin{array}{ccc}
		z_1 &&\\
		& \ddots\\
		&& z_l
	\end{array}
	\end{array} \right],
	\]
where the entries not shown are either $0$, some $u_r$, or some $\omega_r$. Note that 
	\[M(\lambda_1,\ldots, \lambda_k, \mu_1,\ldots,\mu_k, \gamma_1,\ldots,\gamma_l, 0,\ldots,0) = A,\] where $A$ is the matrix in Example \ref{TheMatrixAy}.
	
Now, and for the rest of this paper, assume that 
	\[\Lambda = \{ \lambda_j \pm \mu_j \i \in  \mathbb{C} \setminus \mathbb{R} \suchthat j=1,2,\ldots, k \} \cup \{ \gamma_j \in \mathbb{R} \suchthat j = 1,2,\ldots,l \}
	\]
is a fixed set of $n = 2k+l$ distinct numbers. Define an $\varepsilon$-neighborhood of a set $S \subseteq \mathbb{C}$ to be the set of points that are of distance at most $\varepsilon$ from a point of $S$, that is,
	\[
	N_{\varepsilon}(S) = \{ z \in \mathbb{C} \suchthat |z - s| < \varepsilon \text{ for some } s \in S  \}.
	\]
Since $\Lambda$ consists of $n$ distinct points in the complex plain, there is an $\varepsilon$ such that $N_\varepsilon(\Lambda)$ consists of $n$ disjoint discs $\mathbb{D}_1^+, \mathbb{D}_2^+, \ldots, \mathbb{D}_k^+$, $\mathbb{D}_1^-, \mathbb{D}_2^-, \ldots, \mathbb{D}_k^-$, and $\mathbb{D}_1, \mathbb{D}_2, \ldots, \mathbb{D}_l$ where $\mathbb{D}_j^+$ contains $\lambda_j + \mu_j \i$, $\mathbb{D}_j^-$ contains $\lambda_j - \mu_j \i$, and $\mathbb{D}_j$ contains $\gamma_j$. Also, let $\mathbb{D}_j'$ denote $\mathbb{D}_j \cap \mathbb{R}$, and 
	\[
	\mathbb{D} = \left( \bigcup_{j=1}^{k} \mathbb{D}_j^+ \right) \cup \left( \bigcup_{j=1}^{k} \mathbb{D}_j^- \right) \cup \left( \bigcup_{j=1}^{k} \mathbb{D}_j' \right).
	\]
	
\begin{observation}\label{nonorthogonalevectors}
	Let $A \in \mathbb{R}^{n\times n}$ have $n$ distinct eigenvalues. Then for an eigenvalue $\lambda$ and corresponding left eigenvector $\bm u^\top$ and right eigenvector $\bm v$, we have $\bm u^\top \bm v \neq 0$.
\end{observation}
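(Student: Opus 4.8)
The plan is to use the spectral decomposition afforded by having $n$ distinct eigenvalues. First I would recall that since $A \in \mathbb{R}^{n\times n}$ has $n$ distinct eigenvalues $\lambda = \lambda^{(1)}, \lambda^{(2)}, \ldots, \lambda^{(n)}$ (possibly complex, but that does not matter here), $A$ is diagonalizable over $\mathbb{C}$. Let $\bm v = \bm v^{(1)}, \bm v^{(2)}, \ldots, \bm v^{(n)}$ be a basis of right eigenvectors, and assemble them as the columns of an invertible matrix $V$, so that $AV = V D$ with $D = \operatorname{diag}(\lambda^{(1)}, \ldots, \lambda^{(n)})$. Then $V^{-1} A = D V^{-1}$, which says precisely that the rows of $V^{-1}$ are left eigenvectors of $A$: if $\bm u^\top$ denotes the first row of $V^{-1}$, then $\bm u^\top A = \lambda \bm u^\top$.

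Next I would observe that, because $\lambda$ is a simple eigenvalue, its left eigenvector is unique up to a scalar multiple; hence any left eigenvector corresponding to $\lambda$ is a nonzero multiple of this particular $\bm u^\top$, and it suffices to prove $\bm u^\top \bm v \neq 0$ for this choice. But this is immediate from $V^{-1} V = I$: the $(1,1)$ entry of $V^{-1}V$ is exactly $\bm u^\top \bm v$, and it equals $1 \neq 0$. Rescaling $\bm u$ and $\bm v$ to the eigenvectors actually given in the statement only multiplies $\bm u^\top \bm v$ by a nonzero constant, so it remains nonzero.

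There is really no serious obstacle here; the only point requiring a little care is the uniqueness-up-to-scalar claim for the left eigenvector of a simple eigenvalue, which follows because the eigenspace of $A^\top$ (equivalently, of $A^*$ after conjugation) for $\lambda$ has dimension equal to the algebraic multiplicity of $\lambda$ in a diagonalizable matrix, namely $1$. An alternative phrasing I could give, avoiding the explicit inverse, is: write $\mathbb{C}^n = \operatorname{span}\{\bm v\} \oplus W$ where $W$ is the span of the remaining eigenvectors $\bm v^{(2)}, \ldots, \bm v^{(n)}$; the left eigenvector $\bm u^\top$ for $\lambda$ annihilates every eigenvector for a different eigenvalue (since $\lambda \bm u^\top \bm v^{(i)} = \bm u^\top A \bm v^{(i)} = \lambda^{(i)} \bm u^\top \bm v^{(i)}$ forces $\bm u^\top \bm v^{(i)} = 0$ when $\lambda^{(i)} \neq \lambda$), so $W \subseteq \ker \bm u^\top$; if also $\bm u^\top \bm v = 0$ then $\bm u^\top$ annihilates all of $\mathbb{C}^n$, contradicting $\bm u \neq \bm 0$. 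I would present whichever of these two arguments is shorter in context, most likely the $V^{-1}V = I$ version.
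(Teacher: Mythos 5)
Your proof is correct and follows essentially the same route as the paper: diagonalize $A$ (the paper via the Jordan form $A = SJS^{-1}$ with $J$ diagonal), identify the relevant column of $S$ and row of $S^{-1}$ as right and left eigenvectors, and read off $\bm u^\top \bm v \neq 0$ from $S^{-1}S = I$. Your added remark that simplicity of $\lambda$ makes the eigenvectors unique up to nonzero scalars is a detail the paper leaves implicit, but it does not change the argument.
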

\begin{proof}
	Let $J$ be the Jordan canonical form of $A$. Since all the eigenvalues of $A$ are simple, $J$ is diagonal. Let $A = SJS^{-1}$ for some invertible matrix $S$. Observe that for an eigenvalue $\lambda$ there is an $1 \leq i \leq n$ such that the $i$-th column of $S$, $\bm s^i$, is a right eigenvector of $A$ for the eigenvalue $\lambda$, and the $i$-th row of $S^{-1}$, ${\bm s_i}^\top$, is a left eigenvector of $A$ for the eigenvalue $\lambda$. Since $S^{-1} S = I$ we have ${\bm s_i}^\top \bm s^i = 1$. This implies $\bm u^\top \bm v \neq 0$. 
\end{proof}

	\subsection{Small perturbations of a matrix and its eigenvalues}

	In this part we study the effect of small perturbations of a matrix on its eigenvalues and define a function that maps a matrix to its eigenvalues. Then we will show that the Jacobian matrix of this function evaluated at a certain point has full rank.

	If the matrix $M$ is in a small neighborhood of $A$, then its eigenvalues lie in $N_\varepsilon(\Lambda)$. Moreover, Lemma \ref{CorNumOfRootsFixed} implies that the real eigenvalues of $M$ lie in $\mathbb{D}_j'$'s. Let $\lambda_j(M)$ denote the real part of the eigenvalue of $M$ that lies in $\mathbb{D}_j^+$, $\mu_j(M)$ denote the imaginary part of the eigenvalue of $M$ that lies in $\mathbb{D}_j^+$, and $\gamma_j(M)$ denote the real eigenvalue of $M$ that lies in $\mathbb{D}_j'$. 

	Now define a function $f$ in a small neighborhood of $A$ as follows:
	\begin{align}\label{FunctionEff}
	f \colon \mathbb{R}^{(2k+l)+2m} &\to \mathbb{R}^{2k+l}\\
	M &\mapsto \big( \lambda_1(M),\ldots,\lambda_k(M) \, , \, \mu_1(M), \ldots, \mu_k(m) \, , \, \gamma_1(M), \ldots, \gamma_l(M) \big).
	\end{align}
Thus, $f$ maps a small neighborhood of $A$ to $\mathbb{D}$. The goal is to show that the Jacobian of this function has full row rank. The following two lemmas calculates the derivative of each of components of $f$.

\begin{lemma}\label{LemmaMainDerivatives}
	Let $A$ and $B$ be real matrices where $A$ has distinct eigenvalues $\lambda_r \pm \mu_r \i \in \mathbb{C} \setminus \mathbb{R}$, for $r = 1,2,\ldots,k$, and $\gamma_r \in \mathbb{R}$, for $r = 1,2,\ldots,l$. let $\bm v_r$'s be corresponding unit eigenvectors of $A$, and $\bm w_r$'s be corresponding unit eigenvectors of $A^\top$. Also, Let $A(t) = A + t B$, for $t \in (-1,1)$. Then 
	the followings hold:
	\[
	\frac{\partial}{\partial t}\lambda_j(A(t)) = {\rm Re}(\zeta)\quad \text{ and } \quad
	\frac{\partial}{\partial t}\mu_j(A(t)) = {\rm Im}(\zeta),
	\]
	where $\zeta = \displaystyle \frac{\bm w_r^\top B \bm v_r}{\bm w_r^\top \bm v_r}$.
\end{lemma}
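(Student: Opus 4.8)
The plan is to derive the classical first-order eigenvalue perturbation formula by differentiating the eigenvalue--eigenvector equation, the only genuinely new ingredient being Observation \ref{nonorthogonalevectors}, which takes care of the denominator. Fix an index $j$ and set $\nu = \lambda_j + \mu_j\i$ (or $\nu = \gamma_j$ in the real case), with right eigenvector $\bm v$ (one of the $\bm v_r$) and left eigenvector $\bm w^\top$ (one of the $\bm w_r$), so that $A\bm v = \nu\bm v$ and $\bm w^\top A = \nu\bm w^\top$. Since the eigenvalues of $A$ are distinct, $\nu$ is a simple root of the characteristic polynomial of $A$.

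First I would record the analytic dependence of the relevant data on $t$. The characteristic polynomial of $A(t) = A + tB$ has coefficients that are polynomials in $t$, so by the implicit function theorem, applied at the simple root $\nu$, there is an $\varepsilon > 0$ and an analytic function $t \mapsto \nu(t)$ on $(-\varepsilon,\varepsilon)$ with $\nu(0) = \nu$, such that $\nu(t)$ is a (simple) eigenvalue of $A(t)$. Since $\nu(t)$ is simple, $A(t) - \nu(t) I$ has constant rank $n-1$ near $t = 0$; imposing a coordinate normalization (for instance, requiring the $p$-th coordinate of the eigenvector to stay equal to $v_p$, for some $p$ with $v_p \neq 0$) then turns the eigenvector equation into a linear system whose coefficient matrix is invertible and analytic near $t = 0$, hence with an analytic solution $\bm v(t)$ satisfying $\bm v(0) = \bm v$. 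Rescaling $\bm v(t)$, or $\bm w$, leaves the quantity $\zeta$ unchanged, so it is harmless that $\bm v(t)$ need not be the unit eigenvector.

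Next I would differentiate $A(t)\,\bm v(t) = \nu(t)\,\bm v(t)$ and evaluate at $t = 0$, obtaining $B\bm v + A\,\bm v'(0) = \nu'(0)\,\bm v + \nu\,\bm v'(0)$. Left-multiplying by $\bm w^\top$ and using $\bm w^\top A = \nu\bm w^\top$, the two terms containing $\bm v'(0)$ cancel, leaving $\bm w^\top B\bm v = \nu'(0)\,\bm w^\top\bm v$. By Observation \ref{nonorthogonalevectors} we have $\bm w^\top\bm v \neq 0$, hence $\nu'(0) = (\bm w^\top B\bm v)/(\bm w^\top\bm v) = \zeta$. Finally, Corollary \ref{CorNumOfRootsFixed} ensures that for $t$ small the eigenvalue $\nu(t)$ stays in $\mathbb{D}_j^+$ (respectively stays real and in $\mathbb{D}_j'$), so that by the definitions of $\lambda_j(\cdot), \mu_j(\cdot), \gamma_j(\cdot)$ we have $\lambda_j(A(t)) = \mathrm{Re}\,\nu(t)$ and $\mu_j(A(t)) = \mathrm{Im}\,\nu(t)$ (respectively $\gamma_j(A(t)) = \nu(t)$); taking real and imaginary parts of $\nu'(0) = \zeta$ yields the claimed identities.

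The main obstacle lies entirely in the preliminary step: making precise that the eigenvalue and a consistently normalized eigenvector depend differentiably — indeed analytically — on $t$, so that the product rule is legitimate. Once the simplicity of $\nu$ is exploited through the implicit function theorem, the cancellation is automatic and nothing remains but to separate real and imaginary parts.
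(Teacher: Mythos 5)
Your proof follows essentially the same route as the paper: differentiate $A(t)\bm v(t)=\nu(t)\bm v(t)$ at $t=0$, left-multiply by $\bm w^\top$ so the $\dot{\bm v}$ terms cancel, invoke Observation \ref{nonorthogonalevectors} for the denominator, and separate real and imaginary parts. The only difference is that you explicitly justify the analytic dependence of $\nu(t)$ and $\bm v(t)$ on $t$, a point the paper's proof takes for granted, so your write-up is if anything slightly more careful.
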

\begin{proof}
	Let $\bm v_r(t)$ be a unit eigenvector of $A(t)$ corresponding to $\lambda_r(t) + \mu_r(t) \i$. Note that 
	\[
	A(t) \to A, \quad \bm v_r(t) \to \bm v_r, \quad \lambda_r(t) \to \lambda_r, \quad \text{ and }	\mu_r(t) \to \mu_r,
	\]
	as $t \to 0$.
	Note that 
	\[
	A(t) \bm v_r(t) = (\lambda_r(t) + \mu_r(t) \i) \bm v_r(t).
	\]
	Differentiating with respect to $t$ we have 
	\[
	\dot{A}(t) \bm v_r(t) + A(t) \dot{\bm v}_r(t) = (\dot{\lambda}_r(t) + \dot{\mu}_r(t) \i) \bm v_r(t) + (\lambda_r(t) + \mu_r(t) \i) \dot{\bm v}_r(t).
	\]
	Letting $t = 0$ we have
	\[
	B \bm v_r + A \dot{\bm v}_r(0) = (\dot{\lambda}_r(0) + \dot{\mu}_r(0) \i) \bm v_r + (\lambda_r + \mu_r \i) \dot{\bm v}_r(0).
	\]
	Multiply both sides by $\bm w_r^\top$ from left
	\[
	\bm w_r^\top B \bm v_r + \bm w_r^\top A \dot{\bm v}_r(0) = (\dot{\lambda}_r(0) + \dot{\mu}_r(0) \i) \bm w_r^\top \bm v_r + (\lambda_r + \mu_r \i) \bm w_r^\top \dot{\bm v}_r(0).
	\]
	since $ \bm w_r^\top A = (\lambda_r + \mu_r \i) \bm w_r^\top$ we get
	\[
	\bm w_r^\top B \bm v_r +  (\lambda_r + \mu_r \i) \bm w_r^\top \dot{\bm v}_r(0) = (\dot{\lambda}_r(0) + \dot{\mu}_r(0) \i) \bm w_r^\top \bm v_r + (\lambda_r + \mu_r \i) \bm w_r^\top \dot{\bm v}_r(0).
	\]
	The second terms in left hand side and right hand side of the equation are equal. Thus
	\[
	\bm w_r^\top B \bm v_r = (\dot{\lambda}_r(0) + \dot{\mu}_r(0) \i) \bm w_r^\top \bm v_r.
	\]
	By Observation \ref{nonorthogonalevectors} we have $\bm w_r^\top \bm v_r \neq 0$. Hence
	\begin{equation}\label{DerivativeEquatioinOne}
	\dot{\lambda}_r(0) + \dot{\mu}_r(0) \i = \frac{\bm w_r^\top B \bm v_r}{\bm w_r^\top \bm v_r}.
	\end{equation}
	Conjugating both sides we get 
	\begin{equation}\label{DerivativeEquatioinTwo}
	\dot{\lambda}_r(0) - \dot{\mu}_r(0) \i = \frac{\overline{\bm w}_r^\top B \overline{\bm v}_r}{\overline{\bm w}_r^\top \overline{\bm v}_r}.
	\end{equation}
	Now once add equations \eqref{DerivativeEquatioinOne} and \eqref{DerivativeEquatioinTwo} and once subtract them to get 
	\begin{equation}\label{DerivativesOne}
	\dot{\lambda}_r(0) = {\rm Re} \left( \frac{\bm w_r^\top B \bm v_r}{\bm w_r^\top \bm v_r}\right),
	\end{equation}
	\begin{equation}\label{DerivativesOne}
	\dot{\mu}_r(0) = {\rm Im} \left( \frac{\bm w_r^\top B \bm v_r}{\bm w_r^\top \bm v_r}\right).
	\end{equation}
\end{proof}

\begin{lemma}
Let $A$ be the matrix in Example $\ref{TheMatrixAy}$, and let $E_{ij}$ denote the matrix of appropriate size with a $1$ on its $(i,j)$-entry and zeros elsewhere.  Also, let $B$ in Lemma $\ref{LemmaMainDerivatives}$ be one of the followings:
	\begin{enumerate}
		\item\label{firstcase} $B = E_{2j-1,2j-1} + E_{2j,2j}$, for some $j = 1,2,\ldots,k$,
		\item\label{secondcase} $B = E_{2j-1,2j} - E_{2j,2j-1}$, for some $j = 1,2,\ldots,k$, or
		\item\label{thirdcase} $B = E_{2k+j,2k+j}$, for some $j = 1,2,\ldots,l$.
	\end{enumerate}
	Then 
	\[
	\frac{\partial}{\partial t}\lambda_r(A(0)) = 
	\begin{cases}
		1; & \text{if $t$ is in $(2r-1,2r-1)$ or $(2r,2r)$ position} \\
		  & \text{for $r = 1,2,\ldots k$ or $r$ is in $(2k+r,2k+r)$ }\\
		  & \text{position for $r = 1,2,\ldots,l$.}\\
		0; & \text{otherwise,}
	\end{cases}
	\]
	and 
	\[
	\frac{\partial}{\partial t}\mu_r(A(0)) = 
	\begin{cases}
		1; & \text{if $t$ is in $(2r-1,2r)$ or $(2r,2r-1)$ position} \\
		  & \text{for $r = 1,2,\ldots k$,} \\
		0; & \text{otherwise.}
	\end{cases}
	\]
\end{lemma}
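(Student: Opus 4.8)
The plan is to feed the matrix $A$ of Example~\ref{TheMatrixAy} into Lemma~\ref{LemmaMainDerivatives}, for which all the required eigenvectors are explicit. Writing $\bm e_i$ for the $i$-th standard coordinate vector, a unit right eigenvector of $A$ for $\lambda_r+\mu_r\i$ is $\bm v_r=\tfrac{1}{\sqrt2}(\bm e_{2r-1}+\i\,\bm e_{2r})$, a unit right eigenvector for $\gamma_r$ is $\bm e_{2k+r}$, and in each case the corresponding unit left eigenvector is $\bm w_r=\overline{\bm v}_r$, as recorded in Example~\ref{TheMatrixAy}. Since the numbers in $\Lambda$ are distinct, Observation~\ref{nonorthogonalevectors} applies; in fact here $\bm w_r^\top\bm v_r=\overline{\bm v}_r^\top\bm v_r=\|\bm v_r\|_2^2=1$ for every $r$, so Lemma~\ref{LemmaMainDerivatives} collapses to
\[
\dot\lambda_r(0)+\dot\mu_r(0)\,\i=\overline{\bm v}_r^\top B\,\bm v_r ,
\]
and the whole statement becomes a matter of evaluating the bilinear form $\overline{\bm v}_r^\top B\bm v_r$ for the three listed choices of $B$ and separating real and imaginary parts.

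The second ingredient is a support argument. Each of the three matrices $B$ lives inside a single diagonal block of $A$: the $2\times2$ block on rows and columns $\{2j-1,2j\}$ in cases~\ref{firstcase} and~\ref{secondcase}, and the $1\times1$ block $\{2k+j\}$ in case~\ref{thirdcase}. The vectors $\bm v_r$ are supported on the pairwise disjoint index sets $\{2r-1,2r\}$ (for the complex eigenvalues) and $\{2k+r\}$ (for the real ones). Hence $\overline{\bm v}_r^\top B\bm v_r=0$ as soon as the block carrying $B$ differs from the block carrying $\bm v_r$, which is exactly the ``$0$; otherwise'' branch in each of the two tables.

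What remains are three small explicit computations in the relevant block. If $B=E_{2j-1,2j-1}+E_{2j,2j}$ and $r=j$, then $B$ acts as the identity on the $j$-th block, so $B\bm v_j=\bm v_j$ and $\overline{\bm v}_j^\top B\bm v_j=1$; thus $\dot\lambda_j(0)=1$ and $\dot\mu_j(0)=0$. If $B=E_{2j-1,2j}-E_{2j,2j-1}$ and $r=j$, then $B\bm v_j=\tfrac{1}{\sqrt2}(\i\,\bm e_{2j-1}-\bm e_{2j})$, whence $\overline{\bm v}_j^\top B\bm v_j=\i$ and $\dot\lambda_j(0)=0$, $\dot\mu_j(0)=1$. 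If $B=E_{2k+j,2k+j}$, the only block it meets is the $1\times1$ block $\{2k+j\}$ of the real eigenvalue $\gamma_j$, where $\bm e_{2k+j}^\top B\,\bm e_{2k+j}=1$; so the derivative of that real eigenvalue is $1$ while all the $\dot\lambda_r(0)$ and $\dot\mu_r(0)$ vanish. Taking real and imaginary parts as in Lemma~\ref{LemmaMainDerivatives} then reassembles these values into the two stated case tables.

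I do not expect a genuine obstacle; the two points requiring care are (i) that throughout we use the bilinear pairing $\bm w^\top B\bm v$ rather than the Hermitian inner product, so that the factors of $\i$ in $\bm v_r$ and $\bm w_r=\overline{\bm v}_r$ interact as claimed instead of collapsing into $\|\cdot\|_2^2$ terms, and (ii) verifying the hypotheses of Lemma~\ref{LemmaMainDerivatives} for the matrix of Example~\ref{TheMatrixAy}, namely that its eigenvalues are distinct and that $\bm w_r=\overline{\bm v}_r$ really is a left eigenvector, both of which are immediate from the block structure there.
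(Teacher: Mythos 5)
Your proposal is correct and follows essentially the same route as the paper: apply Lemma~\ref{LemmaMainDerivatives} with the explicit eigenvectors of $A$, use $\bm w_r = \overline{\bm v}_r$ so that $\bm w_r^\top \bm v_r = 1$, evaluate $\bm w_r^\top B \bm v_r$ for each choice of $B$ (getting $1$, $\i$, or $0$ by the block/support structure), and separate real and imaginary parts. Your explicit block-disjointness argument for the zero cases is just a slightly more detailed write-up of the paper's entrywise computation.
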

\begin{proof}
	Note that for matrix $A$ in Example \ref{TheMatrixAy} $w_r = \overline{v}_r$. Thus $w_r^\top v_r = 1$ and 
	\[
	w_r^\top B v_r = 
	\begin{cases}
		\bm w_{r_{2j-1}} \bm v_{r_{2j-1}} + \bm w_{r_{2j}} \bm v_{r_{2j}}  & \text{(in case \ref{firstcase}),} \\
		\bm w_{r_{2j-1}} \bm v_{r_{2j}} - \bm w_{r_{2j}} \bm v_{r_{2j-1}}  & \text{(in case \ref{secondcase}),} \\
		\bm w_{r_{2k+j}} \bm v_{r_{2k+j}}						 & \text{(in case \ref{thirdcase}).} 
	\end{cases}
	\]
	Thus 
	\[
	w_r^\top B v_r = 
	\begin{cases}
		1; \text{ if and only if } r = j & \text{(in case \ref{firstcase}),} \\
		\i; \text{ if and only if } r = j& \text{(in case \ref{secondcase}),} \\
		1; \text{ if and only if } r = 2k+j & \text{(in case \ref{thirdcase}).} 
	\end{cases}
	\]
\end{proof}

	Now we are ready to evaluate the Jacobian of the function $f$.

\begin{corollary}\label{JacobianIsI}
	Let $A$ be the matrix in Example $\ref{TheMatrixAy}$, and the function $f$ be defined by Equation \eqref{FunctionEff}. Also, let $\jac_{\bm x, \bm y, \bm z}$ denote the matrix obtained from the Jacobian matrix of $f$ by keeping only the columns corresponding to the derivatives with respect to $x_j$'s, $y_j$'s, and $z_j$'s. Then 
	\[ \jac_{\bm x,\bm y,\bm z}(f) \at_{A} = I, \]
	where $I$ denotes the identity matrix of size $2k+l$, and thus it is nonsingular.
\end{corollary}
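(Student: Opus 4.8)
The plan is to read off the corollary column by column from the preceding lemma, which has already computed the relevant directional derivatives of $f$. First I would record that $f$ is differentiable in a neighbourhood of $A$: since $A$ has $n=2k+l$ distinct --- hence simple --- eigenvalues, each of $\lambda_j(M),\mu_j(M),\gamma_j(M)$ is a smooth function of $M$ near $A$ (this is precisely what makes the derivatives $\frac{\partial}{\partial t}\lambda_j(A(t))$ in Lemma \ref{LemmaMainDerivatives} meaningful), so $\jac(f)\at_{A}$ is well defined. Then I would identify the three relevant families of columns with the three types of perturbation $B$ appearing in the preceding lemma: since the variable $x_j$ occupies the $(2j-1,2j-1)$ and $(2j,2j)$ entries of $M$, incrementing $x_j$ by $t$ at the point $M=A$ replaces $A$ by $A+tB$ with $B=E_{2j-1,2j-1}+E_{2j,2j}$ (case \ref{firstcase}); likewise the $y_j$-column is the directional derivative along $B=E_{2j-1,2j}-E_{2j,2j-1}$ (case \ref{secondcase}) and the $z_j$-column along $B=E_{2k+j,2k+j}$ (case \ref{thirdcase}).

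Next I would plug these into the preceding lemma. Along $B=E_{2j-1,2j-1}+E_{2j,2j}$ it gives $\frac{\partial}{\partial x_j}\lambda_r=\delta_{rj}$ and $\frac{\partial}{\partial x_j}\mu_r=0$ for every $r$; along $B=E_{2j-1,2j}-E_{2j,2j-1}$ it gives $\frac{\partial}{\partial y_j}\lambda_r=0$ and $\frac{\partial}{\partial y_j}\mu_r=\delta_{rj}$; and along $B=E_{2k+j,2k+j}$, running the argument of Lemma \ref{LemmaMainDerivatives} for the real simple eigenvalue $\gamma_r$ (with real left and right eigenvectors $\bm e_{2k+r}$, so that $\zeta=\bm w_r^\top B\bm v_r$ is real) gives $\frac{\partial}{\partial z_j}\gamma_r=\delta_{rj}$. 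All the remaining mixed derivatives $\frac{\partial}{\partial x_j}\gamma_r$, $\frac{\partial}{\partial y_j}\gamma_r$, $\frac{\partial}{\partial z_j}\lambda_r$ and $\frac{\partial}{\partial z_j}\mu_r$ vanish, because the eigenvector(s) of $A$ belonging to one diagonal block are supported on that block alone, so $\bm w_r^\top B\bm v_r=0$ whenever $B$ is supported on a different block.

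Finally I would assemble the matrix: with the rows of $\jac(f)$ ordered $(\lambda_1,\dots,\lambda_k,\mu_1,\dots,\mu_k,\gamma_1,\dots,\gamma_l)$ and the columns of $\jac_{\bm x,\bm y,\bm z}(f)$ ordered $(x_1,\dots,x_k,y_1,\dots,y_k,z_1,\dots,z_l)$, the above says the $(\lambda_r,x_j)$, $(\mu_r,y_j)$ and $(\gamma_r,z_j)$ entries are $\delta_{rj}$ and all others are $0$; hence $\jac_{\bm x,\bm y,\bm z}(f)\at_{A}=I_{2k+l}$, which is nonsingular because $\det I=1$. I do not anticipate any genuine difficulty: the only thing demanding care is the bookkeeping of this last step --- matching the parameter blocks $\bm x,\bm y,\bm z$ with the output blocks of $f$ in the right order --- and the vanishing of the off-block derivatives, both of which follow at once from the block-diagonal form of $A$ and the explicit eigenvectors of $A$ and $A^\top$ recorded in Example \ref{TheMatrixAy}.
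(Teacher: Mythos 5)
Your proposal is correct and follows essentially the same route as the paper: the corollary is read off column by column from Lemma \ref{LemmaMainDerivatives} together with the computation of $\bm w_r^\top B \bm v_r$ for the three perturbation directions $B$, using the explicit block-supported eigenvectors of $A$ and $A^\top$ from Example \ref{TheMatrixAy}. The details you add --- differentiability of $f$ near $A$ from simplicity of the eigenvalues, the real-eigenvalue case $\gamma_r$, and the vanishing of the off-block derivatives --- are exactly the points the paper leaves implicit.
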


\section{Main Result}\label{SectionMainResult}
In this section we prove that for given $l$ distinct real numbers, $2k$ distinct non-real numbers which are conjugate pairs, and a graph on $n$ vertices with a matching of size at least $k$, there is a real matrix whose eigenvalues are the given numbers and its graph is the given graph.

\begin{theorem}
	For given distinct numbers $\lambda_1 \pm \mu_1\i, \lambda_2 \pm \mu_2\i, \ldots, \lambda_k \pm \mu_k\i \in \mathbb{C} \setminus \mathbb{R}$ and $\gamma_1, \gamma_2, \ldots, \gamma_l \in \mathbb{R}$, and a given graph $G$ on $2k+l$ vertices with a matching of size at least $k$ there is a real matrix whose eigenvalues are the given numbers and its graph is $G$.
\end{theorem}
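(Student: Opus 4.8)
The plan is to feed into the Implicit Function Theorem the statement of Corollary~\ref{JacobianIsI}, namely that the Jacobian of $f$ restricted to the columns for $\bm x,\bm y,\bm z$ is the identity at $A$. First I would normalize $G$ as in Section~\ref{SectionPreliminaries}: fix a matching of size at least $k$ in $G$, choose $k$ of its edges, and relabel the $2k+l$ vertices so that these become $\{1,2\},\{3,4\},\ldots,\{2k-1,2k\}$, listing the remaining $m=|E(G)|-k$ edges as $\{i_r,j_r\}$ with $i_r<j_r$, $r=1,\ldots,m$. Relabeling the vertices is a permutation similarity of the associated matrices and so changes neither the spectrum nor the isomorphism type of the graph, hence it suffices to treat the relabeled graph. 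Then form $M=M(\bm x,\bm y,\bm z,\bm u,\bm\omega)$ and the matrix $A$ of Example~\ref{TheMatrixAy}, so that $A=M(\lambda_1,\ldots,\lambda_k,\mu_1,\ldots,\mu_k,\gamma_1,\ldots,\gamma_l,0,\ldots,0)$, and take $f$ from \eqref{FunctionEff}; on a neighborhood of $A$ it is continuously differentiable (indeed real-analytic, since a simple eigenvalue depends analytically on the matrix entries), and $f(A)=(\lambda_1,\ldots,\lambda_k,\mu_1,\ldots,\mu_k,\gamma_1,\ldots,\gamma_l)=:\bm c$.

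Next I would apply the Implicit Function Theorem to $f(\bm x,\bm y,\bm z,\bm u,\bm\omega)-\bm c$ at $A$, solving for $(\bm x,\bm y,\bm z)$ in terms of the parameters $(\bm u,\bm\omega)$. By Corollary~\ref{JacobianIsI} the pertinent partial Jacobian at $A$ equals the identity and is in particular invertible, so there are a neighborhood $U\subseteq\mathbb{R}^{2m}$ of $\bm 0$ and a continuous map $\phi\colon U\to\mathbb{R}^{2k+l}$ with $\phi(\bm 0)=(\lambda_1,\ldots,\lambda_k,\mu_1,\ldots,\mu_k,\gamma_1,\ldots,\gamma_l)$ and $f\bigl(\phi(\bm u,\bm\omega),\bm u,\bm\omega\bigr)=\bm c$ for all $(\bm u,\bm\omega)\in U$. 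The $j$-th entry of the $\bm y$-block of $\phi(\bm 0)$ is $\mu_j\neq0$ (because $\lambda_j\pm\mu_j\i\notin\mathbb{R}$), so by continuity I shrink $U$ so that every entry of the $\bm y$-block of $\phi$ stays nonzero on $U$. Now pick $(\bm u^\ast,\bm\omega^\ast)\in U$ with all of its $2m$ coordinates nonzero, set $(\bm x^\ast,\bm y^\ast,\bm z^\ast)=\phi(\bm u^\ast,\bm\omega^\ast)$, and let $M^\ast=M(\bm x^\ast,\bm y^\ast,\bm z^\ast,\bm u^\ast,\bm\omega^\ast)$.

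It remains to check that $M^\ast$ has the right graph and spectrum. Its nonzero off-diagonal entries sit exactly at the positions $(2j-1,2j)$ and $(2j,2j-1)$ for $1\le j\le k$, with values $y_j^\ast$ and $-y_j^\ast$, and at the positions $(i_r,j_r)$ and $(j_r,i_r)$ for $1\le r\le m$, with values $u_r^\ast$ and $\omega_r^\ast$, all of which are nonzero; these positions correspond precisely to the edges of the relabeled $G$, so the graph of $M^\ast$ is $G$. Moreover $M^\ast$ lies in the domain of $f$, so by Corollary~\ref{CorNumOfRootsFixed} its characteristic polynomial has exactly $l$ simple real roots, one in each $\mathbb{D}_j'$, and $2k$ distinct non-real roots, one in each $\mathbb{D}_j^+$ and one in each $\mathbb{D}_j^-$, occurring in conjugate pairs since $M^\ast$ is real; then $f(M^\ast)=\bm c$ forces the root in $\mathbb{D}_j^+$ to be $\lambda_j+\mu_j\i$ and the root in $\mathbb{D}_j'$ to be $\gamma_j$, and conjugation places $\lambda_j-\mu_j\i$ in $\mathbb{D}_j^-$; hence the eigenvalues of $M^\ast$ are exactly the prescribed numbers. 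The only genuinely delicate point is this last bookkeeping: switching on $\bm u,\bm\omega$ must create exactly the edges of $G$ and none other while the spectrum is held at $\bm c$ and the $\bm y$-coordinates are kept away from $0$, and this is exactly what the openness built into the Implicit Function Theorem, together with $\mu_j\neq0$, provides; the substantive work — the full-rank computation — is already done in Corollary~\ref{JacobianIsI}.
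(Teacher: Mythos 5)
Your proposal is correct and follows essentially the same route as the paper: the same parametrized matrix $M$, the same eigenvalue map $f$, Corollary~\ref{JacobianIsI} for the invertible partial Jacobian, and the Implicit Function Theorem to hold the spectrum at the prescribed values while switching on the off-matching entries. If anything, you are more careful than the paper on the bookkeeping details (relabeling via permutation similarity, explicitly choosing all entries of $\bm u,\bm\omega$ nonzero, and keeping the $\bm y$-block nonzero by continuity from $\mu_j\neq 0$), but these are refinements of the identical argument rather than a different approach.
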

\begin{proof}
	Let $A$ be the matrix in Example \ref{TheMatrixAy}, matrix $M$ be defined as above, and function $f$ be defined by Equation \eqref{FunctionEff}. Let $\bm \lambda = (\lambda_1,\ldots, \lambda_k)$, $\bm \mu = (\mu_1,\ldots,\mu_k)$, and $\bm \gamma = (\gamma_1,\ldots,\gamma_l)$. also, let $\bm 0$ denote a zero vector of appropriate size. Note that 
	\[
	M(\bm \lambda, \bm \mu, \bm \gamma, \bm 0, \bm 0) = A.
	\]
	Also note that
	\[
	f(\bm \lambda, \bm \mu, \bm \gamma, \bm 0, \bm 0) = (\bm \lambda, \bm \mu, \bm \gamma).
	\]
	Furthermore, by Corollary \ref{JacobianIsI} we have \[ \jac_{\bm x,\bm y,\bm z}(f) \at_{A} = I,\] and hence it is nonsingular. Then by the Implicit Function Theorem for any small $\bm \varepsilon, \bm \delta \in \mathbb{R}^m$, there are $\overline{\bm \lambda}, \overline{\bm \mu}, \overline{\bm \gamma}$ close to $\bm \lambda, \bm \mu, \bm \gamma$ such that $f(\overline{\bm \lambda}, \overline{\bm \mu}, \overline{\bm \gamma}, \bm \varepsilon, \bm \delta ) = (\bm \lambda, \bm \mu, \bm \gamma)$. Choose $\overline{\bm \lambda}$, $\overline{\bm \mu}$, and $\overline{\bm \gamma}$ such that they have no zero entries, and let $\tilde{A} = M(\overline{\bm \lambda}, \overline{\bm \mu}, \overline{\bm \gamma}, \bm \varepsilon, \bm \delta)$. Then eigenvalues of $\tilde{A}$ are $\lambda_j \pm \mu_j \i$ for $j = 1,2,\ldots,k$ and $\gamma_j$ for $j = 1,2,\ldots l$, and graph of $\tilde{A}$ is $G$.
\end{proof}

\begin{remark}
	If all the prescribed eigenvalues are real, one can always choose $\bm \varepsilon = \bm \delta$ to find a symmetric matrix $\tilde{A}$. Also, if all the prescribed eigenvalues are purely imaginary, one can always choose $\bm \varepsilon = - \bm \delta$ to make $\tilde{A}$ the sum of a skew-symmetric matrix and a diagonal matrix. The case with all real eigenvalues was previously proven in \cite{lambdamu13} and the case with all purely imaginary eigenvalues was shown in \cite{sudiptakeivan14}, and the constructed matrix is shown to have a zero diagonal, that is, it is a skew-symmetric matrix.
\end{remark}

\begin{corollary}
	For a given graph $G$ with a matching of size $k$, any real matrix with distinct eigenvalues which at most $2k$ of them are non-real, is similar to a real matrix whose graph is $G$.
\end{corollary}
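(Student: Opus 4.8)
The plan is to reduce the statement directly to the Theorem of the previous section. Let $C$ be the given real matrix, say of order $n$, with $n$ distinct eigenvalues, at most $2k$ of which are non-real. Since $C$ is real, its non-real eigenvalues occur in complex conjugate pairs, so we may write them as $\lambda_1 \pm \mu_1 \i, \ldots, \lambda_{k'} \pm \mu_{k'} \i$ with $\mu_j \neq 0$ and $k' \leq k$, while the remaining eigenvalues $\gamma_1, \ldots, \gamma_l \in \mathbb{R}$ are real and pairwise distinct, with $2k' + l = n$. The graph $G$ must have $n$ vertices in order to be the graph of an $n \times n$ matrix, and a matching of size $k$ in $G$ contains a matching of size $k'$ (discard $k - k'$ of its edges). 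Hence the hypotheses of the Theorem are met for the eigenvalue data $\{\lambda_j \pm \mu_j \i\}_{j=1}^{k'} \cup \{\gamma_j\}_{j=1}^{l}$ together with the graph $G$.

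First I would invoke the Theorem to produce a real matrix $\tilde{A}$ whose eigenvalues are exactly $\lambda_1 \pm \mu_1 \i, \ldots, \lambda_{k'} \pm \mu_{k'} \i, \gamma_1, \ldots, \gamma_l$ — that is, precisely the eigenvalues of $C$, with the same multiplicities (all simple) — and whose graph is $G$. It then remains only to verify that $\tilde{A}$ and $C$ are similar \emph{over $\mathbb{R}$}, not merely over $\mathbb{C}$.

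For the similarity step, observe that $C$ and $\tilde{A}$ have the same characteristic polynomial, and this polynomial has $n$ distinct roots; hence each of $C$ and $\tilde{A}$ is diagonalizable over $\mathbb{C}$ and both are conjugate over $\mathbb{C}$ to the same diagonal matrix, so $C$ and $\tilde{A}$ are similar over $\mathbb{C}$. The classical fact that two matrices with entries in a field $F$ which are similar over an extension field are already similar over $F$ (equivalently, that they share a rational canonical form over $F$) then yields $C \sim \tilde{A}$ over $\mathbb{R}$. I expect this last point — upgrading complex similarity to real similarity — to be the only place that requires a word of justification, and even that is standard; the rest is bookkeeping with the matching size and with the conjugate-pair structure forced on the spectrum of a real matrix. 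This completes the plan.
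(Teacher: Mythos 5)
Your proposal is correct and is essentially the argument the paper intends: the corollary is stated as an immediate consequence of the Theorem, obtained by applying it to the (conjugate-paired) spectrum of the given matrix and then noting that two real matrices with the same distinct eigenvalues are similar over $\mathbb{R}$. Your explicit handling of the matching size $k' \leq k$ and the descent from complex to real similarity just fills in the routine details the paper leaves unstated.
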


	Note that this implies any real matrix with distinct eigenvalues is similar to a tridiagonal real matrix with nonzero superdiagonal and subdiagonal entries. On the other hand any real tridiagonal matrix with nonzero superdiagonal and subdiagonal entries has distinct eigenvalues. Thus we have the following corollary. 
\begin{corollary}
	A real matrix has distinct eigenvalues if and only if it is similar to a real irreducible tridiagonal matrix.
\end{corollary}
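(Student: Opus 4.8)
The plan is to prove the two implications of the equivalence separately, since they have quite different flavors. The forward implication --- a real matrix with distinct eigenvalues is similar to a real irreducible tridiagonal matrix --- carries the real content, and I would extract it from the results already established simply by choosing the underlying graph to be a path. The reverse implication belongs to the classical circle of facts about \emph{unreduced} tridiagonal matrices (those with every sub- and super-diagonal entry nonzero), and I would attack it through the rank of $T-\lambda I$.

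\emph{Forward direction.} Let $A \in \mathbb{R}^{n\times n}$ have $n$ distinct eigenvalues. Its non-real eigenvalues occur in conjugate pairs; write them as $\lambda_1 \pm \mu_1\i, \ldots, \lambda_k \pm \mu_k\i$ and let $\gamma_1, \ldots, \gamma_l$, with $l = n-2k$, be the real ones, so that $\Lambda$ consists of $2k+l = n$ distinct numbers. Take $G$ to be the path on $n$ vertices, whose largest matching has size $\lfloor n/2\rfloor \geq k$. By the preceding Corollary (or directly by the Theorem) there is a real matrix $\tilde A$ whose eigenvalues are exactly $\lambda_1\pm\mu_1\i, \ldots, \lambda_k\pm\mu_k\i, \gamma_1, \ldots, \gamma_l$ and whose graph is this path; having that graph means precisely that $\tilde A$ is tridiagonal with all sub- and super-diagonal entries nonzero, i.e.\ a real irreducible tridiagonal matrix. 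Since $A$ and $\tilde A$ each have $n$ distinct eigenvalues they are both diagonalizable, and as they share a spectrum they are similar to the same diagonal matrix, hence to each other.

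\emph{Reverse direction, and the main obstacle.} Suppose $A$ is similar to a real irreducible tridiagonal matrix $T$; the task is to show $T$ has simple eigenvalues. For any scalar $\lambda$, delete the first row and the last column of $T-\lambda I$: the surviving $(n-1)\times(n-1)$ matrix is upper triangular with the (nonzero) sub-diagonal entries of $T$ on its diagonal, hence invertible, so $\operatorname{rank}(T-\lambda I)\geq n-1$ and every eigenvalue of $T$ has geometric multiplicity $1$; equivalently, $T$ is nonderogatory. The crux --- the step I expect to be the real obstacle --- is upgrading this to algebraic multiplicity $1$, that is, ruling out a single Jordan block of size larger than $1$. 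When $T$ is symmetric --- equivalently, when each product $T_{i,i+1}T_{i+1,i}$ is positive, so that $T$ is conjugate by a real diagonal matrix to a symmetric unreduced tridiagonal matrix --- diagonalizability is automatic and the argument closes. For a general non-symmetric real irreducible tridiagonal matrix this is genuinely delicate (nilpotent examples already occur at $n=3$), so to obtain the reverse implication in full generality one should either restrict attention to the symmetric (more generally, diagonalizable) representative or exploit the extra structure of the matrices produced by the forward construction; this is where I would expect to have to do the most work, or to sharpen the statement.
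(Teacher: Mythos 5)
Your forward direction coincides with the paper's: take $G$ to be the path on $n$ vertices (matching of size $\lfloor n/2\rfloor\ge k$), invoke the main theorem (or the corollary preceding this one) to get a real matrix with the prescribed spectrum whose graph is the path, i.e.\ a real irreducible tridiagonal matrix, and conclude similarity from diagonalizability plus equality of the (simple) spectra. That part is fine and is exactly what the paper intends.

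For the reverse direction your hesitation is not a defect of your proposal but a genuine flaw in the statement: the paper's only justification is the bare assertion that ``any real tridiagonal matrix with nonzero superdiagonal and subdiagonal entries has distinct eigenvalues,'' and this is false in the non-symmetric case. Your rank argument (delete the first row and last column of $T-\lambda I$ to get an upper triangular matrix whose diagonal consists of the nonzero subdiagonal entries) correctly gives geometric multiplicity one, hence that $T$ is nonderogatory, but nothing more. Indeed
\[
T=\left[\begin{array}{rrr} 0 & 1 & 0\\ 1 & 0 & 1\\ 0 & -1 & 0 \end{array}\right]
\]
is real, tridiagonal and irreducible, yet $\det(T-\lambda I)=-\lambda^{3}$, so $T$ is nilpotent; taking $A=T$ (which is trivially similar to a real irreducible tridiagonal matrix) shows the ``if'' half of the corollary fails as stated. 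So you have not missed an idea the paper supplies --- the paper supplies none --- and the statement must be repaired, for instance by adding the sign condition $T_{i,i+1}T_{i+1,i}>0$ (making $T$ diagonally similar to an unreduced symmetric tridiagonal matrix, whose eigenvalues are real and distinct), or by weakening the conclusion of the reverse implication from ``distinct eigenvalues'' to ``nonderogatory.'' Your suggestion to restrict to such representatives is exactly the right fix.
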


\bibliographystyle{plain}
\bibliography{ref151217}
\end{document}